\def\R{{\mathbb{R}}}
\def\A{{\mathcal{A}}}
\def\Shi{{\mathcal{S}}}
\numberwithin{equation}{section}
\newtheorem{theorem}{Theorem}[section]
\newtheorem{prop}[theorem]{Proposition}
\newtheorem{lemma}[theorem]{Lemma}
\newtheorem{define}[theorem]{Definition}
\newtheorem{rem}[theorem]{Remark}
\title{
A basis construction for the Shi arrangement of the type $B_{\ell}$ or $C_{\ell}$}
\author{Daisuke Suyama
\footnote
{
Research Fellow of the Japan Society for the Promotion
of Science.
}
\\
\\
{\footnotesize {\it Department of Mathematics, Hokkaido University, 
Sapporo, Hokkaido 060-0810, Japan.}}
\\
{\footnotesize (email: dsuyama@math.sci.hokudai.ac.jp)}
}
\date{}
\begin{document}

\maketitle

\begin{abstract} 
The Shi arrangement is an affine 
arrangement of hyperplanes 
consisting of the hyperplanes 
of the Weyl arrangement
and their parallel translations.
It was introduced by J.-Y. Shi 
in the study of the Kazhdan-Lusztig representation of the affine Weyl groups.
M. Yoshinaga showed that the cone over 
every Shi arrangement
is free.
In this paper, 
we construct an explicit basis
for the derivation module
of the cone over the Shi arrangements 
of the type $B_{\ell}$ or $C_{\ell}$.
\end{abstract} 

{\footnotesize {\it Keywords}: Hyperplane arrangement; Shi arrangement; Free arrangement; Derivations}

\section{Introduction} 

Let $E$ be an $\ell$-dimensional real Euclidean space.
Let $\Phi$ be an irreducible root system and
$\Phi_{+}$ denote the set of positive roots of $\Phi$. 
The 
{\bf Weyl arrangement} of the type $\Phi$ is denoted by
$\A(\Phi)$:
$$
\mathcal{A}(\Phi)=\{ H_{\alpha} \mid \alpha \in \Phi_{+} \}, \,\,
{\rm where~} H_{\alpha}=\{ v \in E \mid \alpha(v)=0 \}.
$$ 
Let
\[
H_{\alpha,1} = \{ v \in E \mid \alpha(v)=1 \}.
\]
Then the {\bf Shi arrangement} is given by
\[
\mathcal{A}(\Phi) \cup \{ H_{\alpha,1} \mid \alpha \in \Phi_{+} \} = \bigcup_{\alpha \in \Phi_{+}} \{ H_{\alpha},
H_{\alpha,1} \}.
\]
Embed the $\ell$-dimensional space $E$ into $V=\R^{\ell+1} 
$ by 
adding a new coordinate $z$ such that $E$ is defined by 
the equation $z=1$ in $V$. 
Then, as in \cite[Definition 1.15]{OT},
we have the {\bf cone}  $\Shi (\Phi) $ over the Shi arrangement.
It is a central arrangement in $V$
defined by
\[
Q(\Shi (\Phi) ) = 
z \prod_{\alpha \in \Phi_{+}} \alpha (\alpha -z) =0.
\]
Let $S$ be the algebra of polynomial functions on $V$ and let $\mathrm{Der}(S)$ be the module of derivations 
of $S$ to itself
\[
\mathrm{Der}(S)=\{ \theta : S \rightarrow S \mid \theta\ \text{is}\ \mathbb{R} \text{-linear and}\ \theta (fg) =f\theta(g)+ g\theta(f)\ \text{for any}\ f,g \in S \}.
\]
The derivation module $D(\Shi (\Phi))$ is defined by 
\begin{multline*} 
D(\Shi (\Phi))=\{ \theta \in \mathrm{Der}(S) \mid
\theta(z)\ \text{is divisible by}\ z,  
\theta(\alpha)\ \text{is divisible by}\ \alpha\\
\text{ and }\ 
\theta(\alpha-z)\ \text{is divisible by}\ \alpha-z
\text{ for any}\ \alpha \in \Phi_{+} \}.
\end{multline*} 
We say that $\Shi (\Phi)$ is {\bf free} if $D(\Shi (\Phi))$ is a free $S$-module. 

Let $x_{1},\ldots ,x_{\ell}$ be an orthonormal basis for the dual space $E^{*}$.
In this paper we explicitly choose root systems $\Phi^{B}$ and $\Phi^{C}$, 
and positive root systems $\Phi_{+}^{B}$ and $\Phi_{+}^{C}$
of the types $B_{\ell}$ and $C_{\ell}$ respectively as follows:
\begin{align*} 
\Phi^{B} &:= 
\{ \pm x_{i}, \pm x_{p} \pm x_{q} \in E^{*}
\mid 1\leq i\leq \ell,1\leq p < q\leq \ell \},
\\
\Phi_{+}^{B} &:=
\{ x_{i}, x_{p} \pm x_{q} \in \Phi^{B} \mid 1\leq i\leq \ell , 1\leq p < q\leq \ell \},
\\
\Phi^{C} &:= 
\{ \pm 2x_{i}, \pm x_{p} \pm x_{q} \in E^{*}
\mid 1\leq i\leq \ell,1\leq p < q\leq \ell \},
\\
\Phi_{+}^{C} &:=
\{ 2x_{i}, x_{p} \pm x_{q} \in \Phi^{C} \mid 1\leq i\leq \ell,1\leq p < q\leq \ell \}.
\end{align*} 
We express the cones over the Shi arrangements of 
the types $B_{\ell}$ and $C_{\ell}$ by
$\Shi (B_{\ell})$ and $\Shi (C_{\ell})$ respectively.  

In the study of the Kazhdan-Lusztig
representation theory of the affine Weyl groups,
J.-Y. Shi introduced 
the Shi arrangements for the type $A_{\ell}$ 
in \cite{shi1}.
Later a good number of articles, including \cite{Ath,ER,Hea,St96,Y04},
study the Shi arrangements. 
M. Yoshinaga proved in \cite{Y04}
that the cone over the Shi arrangement is a free arrangement
 by settling the Edelman-Reiner 
conjecture in \cite{ER} which asserts that the generalized 
Shi and Catalan arrangements are free.
However, even in the case of the cone over the Shi arrangement 
of the type $A_{\ell}$, 
no basis was constructed explicitly at that time.
Recently a basis for the cone over the Shi arrangement of the type $A_{\ell}$
is constructed explicitly in \cite{Sute}
and of the type $D_{\ell}$ in \cite{Gao}.
In those papers the most important ingredients of their 
recipes are 
the Bernoulli polynomials $B_{k}(x)$
and their relatives $B_{r, s}(x)$.
In the present paper, we construct bases for
the cones over the Shi arrangements of the types $B_{\ell}$ and $C_{\ell}$
by using new Bernoulli-like polynomials $B_{r,s}^{B}(x)$ and $B_{r,s}^{C}(x)$.

The organization of this paper is as follows: 
in Section 2, we will construct $\ell$ derivations 
$
\varphi_{1}^{B},
\dots,
\varphi_{\ell}^{B}
$ 
belonging to $D(\Shi(B_{\ell}))$.
In Section 3, we will prove that they,
together with the Euler derivation,
 form a basis of 
$D(\Shi(B_{\ell}))$.
In Section 4, we 
present a similar construction of a basis for 
$D(\Shi(C_{\ell}))$ for the type $C_{\ell}$.

\section{A basis construction for the type $B_{\ell}$}

\begin{define}
\label{defBofB} 
For $(r,s) \in \mathbb{Z}_{> 0}\times \mathbb{Z}_{\geq 0}$, 
define a polynomial
$B_{r,s}^{B} (x)$ in $x$ satisfying the following two conditions:
\begin{enumerate}[(i)]
\item $B_{r,s}^{B} (x+1)-B_{r,s}^{B} (x)=\displaystyle\frac{(x+1)^{r}-(-x)^{r}}{(x+1)-(-x)} (x+1)^{s}(-x)^{s}$,
\item $B_{r,s}^{B} (0)=0$.
\end{enumerate}
\end{define}

Note that $\frac{(x+1)^{r}-(-x)^{r}}{(x+1)-(-x)}$
is a polynomial either 
of degree $r-1$ (when $r$ is odd)
or 
of degree $r-2$ (when $r$ is even).
It is thus
easy to see that $B_{r,s}^{B} (x)$
uniquely exists and
\[
\deg B_{r,s}^{B} (x)
=
\begin{cases}
r+2s   &{\rm ~if~} r {\rm ~is~odd},\\
r+2s-1 &{\rm ~if~} r {\rm ~is~even}.
\end{cases}  
\]

\begin{lemma}
\label{oddfunctionB}
$B_{r,s}^{B} (x)$ is an odd function.
%
\end{lemma}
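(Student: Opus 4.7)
The plan is to show that the candidate $f(x) := B_{r,s}^{B}(x) + B_{r,s}^{B}(-x)$ vanishes identically, which is exactly the oddness statement.

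First I would observe that the right-hand side
\[
R(x) := \frac{(x+1)^{r}-(-x)^{r}}{(x+1)-(-x)}\,(x+1)^{s}(-x)^{s}
\]
of condition (i) in Definition \ref{defBofB} is invariant under the substitution $x \mapsto -x-1$: under that substitution, the pair $\{x+1,\,-x\}$ is swapped, so both the symmetric fraction and the product $(x+1)^{s}(-x)^{s}$ are preserved. In other words, $R(-x-1) = R(x)$.

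Next I would compute the first difference of $f$. Using condition (i) directly gives
\[
B_{r,s}^{B}(x+1) - B_{r,s}^{B}(x) = R(x),
\]
and replacing $x$ by $-x-1$ in that same identity yields
\[
B_{r,s}^{B}(-x) - B_{r,s}^{B}(-x-1) = R(-x-1) = R(x).
\]
Subtracting these two identities, the two copies of $R(x)$ cancel, and one obtains $f(x+1) - f(x) = 0$. Since $f$ is a polynomial that is periodic with period $1$, it must be a constant. Finally, condition (ii) gives $f(0) = 2B_{r,s}^{B}(0) = 0$, so $f \equiv 0$ and $B_{r,s}^{B}(-x) = -B_{r,s}^{B}(x)$.

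There is no real obstacle here; the only thing to be careful about is bookkeeping the substitution $x \mapsto -x-1$ (which swaps the two base quantities $x+1$ and $-x$) and verifying that both factors of $R(x)$ are fixed under it. Once that symmetry is recorded, the rest is an immediate telescoping-plus-initial-condition argument.
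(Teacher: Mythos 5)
Your proposal is correct and follows essentially the same route as the paper: substitute $x \mapsto -x-1$ into condition (i), observe the right-hand side is invariant, deduce $f(x+1)=f(x)$ for $f(x)=B_{r,s}^{B}(x)+B_{r,s}^{B}(-x)$, and conclude $f\equiv 0$ from $f(0)=0$. The only cosmetic difference is that you invoke ``a periodic polynomial is constant'' while the paper evaluates $f$ at all nonnegative integers and uses that a polynomial with infinitely many roots vanishes; both finishes are valid.
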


\begin{proof}
Replacing $x$ with $-x-1$ in \ref{defBofB} (i), we have
\begin{align*}
B_{r,s}^{B}(-x)-B_{r,s}^{B}(-x-1) &= \frac{(-x)^{r}-(x+1)^{r}}{(-x)-(x+1)} (-x)^{s}(x+1)^{s} \\
&= B_{r,s}^{B}(x+1)-B_{r,s}^{B}(x).
\end{align*}
Then we get
$
F(x) = F(x+1)
$
where
$F(x):=B_{r,s}^{B}(x)+B_{r,s}^{B}(-x).$
Thus we obtain
\[ 
F(n)
=F(n-1)
=\dots
=F(0)=0
\,\,\,
(n \in \mathbb{Z}_{\geq 0})
\]
and
\[ B_{r,s}^{B}(x)+B_{r,s}^{B}(-x)=F(x)=0. \]
\end{proof}

\begin{define}
The homogenization $\overline{B}_{r,s}^{B} (x,z)$ of $B_{r,s}^{B} (x)$ is defined by
\[ \overline{B}_{r,s}^{B} (x,z) := z^{r+2s} B_{r,s}^{B} (x/z). \]
\end{define}

Let $1\leq j\leq \ell$. Define 
\[ I_{1}^{(j)}= \{ x_{1}, \ldots ,x_{j-1} \},
\,\,\,
I_{2}^{(j)}= \{ x_{j} \},
\,\,\,
I_{3}^{(j)}= \{ x_{j+1}, \ldots , x_{\ell} \}
\] 
Let $\sigma_{k}(y_{1}, y_{2}, \dots)\ \
(k \in \mathbb{Z}_{\ge 0})$ denote the elementary 
symmetric polynomials in $y_{1}, y_{2}, \dots$ 
 of degree $k$.  Then define
\[
\sigma_{k}^{(2,j)} := \sigma_{k}(x_{j}),\  
\tau_{k}^{(3,j)} := \sigma_{k} (x_{j+1}^{2}, \ldots ,x_{\ell}^{2}). 
\]

\begin{define}
\label{basisdefinition} 
Let
$\partial_{i} \,\,
(1\leq i\leq \ell)
$ and $\partial_{z} $ denote 
$\partial/\partial x_{i} $ 
and
$\partial/\partial z $
respectively. 
Define the Euler derivation
\[ \theta_{E} := z \partial_{z} + \sum_{i=1}^{\ell} x_{i} \partial_{i} \] 
and
the following homogeneous derivations 
\begin{multline*}
\varphi_{j}^{B}
:=
(-1)^{j} 
\sum_{i=1}^{\ell} 
\biggl\{ 
\sum_{\substack{N_{1},N_{2} \subset I_{1}^{(j)} \\ N_{1} \cap N_{2} = \emptyset}}
\biggl(
\prod_{x_{t} \in N_{1}} x_{t}^{2}
\biggr)
\biggl(
\prod_{x_{t} \in N_{2}} (-x_{t}z)
\biggr)
\\
\sum_{ \substack{0 \leq k_{2} \leq 1 \\ 0 \leq k_{3} \leq \ell -j }}
(-1)^{k_{2}+k_{3}}
\sigma_{k_{2}}^{(2,j)} \tau_{k_{3}}^{(3,j)}
\ 
\overline{B}_{r, s}^{B} (x_{i},z) 
\biggr\} \partial_{i},
\end{multline*}
where
\[ r := 2\ell -2j-k_{2}-2k_{3}+2\ge 1,
\ \ \
s := |I_{1}^{(j)} \setminus (N_{1} \cup N_{2})|
= (j-1)-|N_{1}|-|N_{2}|\ge 0 \]
for
$1\leq j \leq \ell$.
\end{define}

It is easy to see that each $\varphi_{j}^{B}$ is 
homogeneous derivation of degree $2\ell$ which is equal to 
the
Coxeter number for $B_{\ell} $.  
We will prove
that  the derivations
$\theta_{E}$
and
$\varphi_{1}^{B}, \dots , \varphi_{\ell}^{B}$
form a basis for $D(\Shi (B_{\ell}))$. 
First we will verify the following

\begin{prop}
\label{modeqB}
Let $\varepsilon \in \{ -1,0,1 \}$.
Then we have the following congruence relations:
\[
\overline{B}_{r,s}^{B}(x_{p}, z) + \varepsilon 
\overline{B}_{r,s}^{B}(x_{q}, z)
\equiv 0
\mod (x_{p}+\varepsilon x_{q}),
\]
\begin{multline*}
\overline{B}_{r,s}^{B}(x_{p}, z) + \varepsilon \overline{B}_{r,s}^{B}(x_{q}, z)
\equiv
(x_{p}+ \varepsilon x_{q}) \frac{x_{p}^{r}- (\varepsilon x_{q})^{r}}{x_{p}- \varepsilon x_{q}} (x_{p} \cdot \varepsilon x_{q})^{s}
\mod (x_{p}+\varepsilon x_{q}-z).
\end{multline*} 
\end{prop}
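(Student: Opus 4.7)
Both congruences are proved by evaluating the left-hand side at the zero locus of the modulus, which reduces everything to single-variable identities about $B_{r,s}^{B}$. The oddness from Lemma \ref{oddfunctionB} and the defining difference equation in Definition \ref{defBofB}(i) are the only ingredients needed.

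For the first congruence I substitute $x_{p} = -\varepsilon x_{q}$. When $\varepsilon = 0$, the claim reduces to $\overline{B}_{r,s}^{B}(0,z) = z^{r+2s} B_{r,s}^{B}(0) = 0$, which follows from $B_{r,s}^{B}(0) = 0$. When $\varepsilon = \pm 1$, Lemma \ref{oddfunctionB} transfers to oddness of $\overline{B}_{r,s}^{B}(\cdot,z)$ in its first variable, so that $\overline{B}_{r,s}^{B}(-\varepsilon x_{q},z) = -\varepsilon\,\overline{B}_{r,s}^{B}(x_{q},z)$ cancels the second summand exactly.

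For the second congruence I substitute $z = x_{p} + \varepsilon x_{q}$ and put $y := x_{p}/z$, so that $x_{q}/z = (1-y)/\varepsilon$ when $\varepsilon \neq 0$. Using oddness, the reduction
\[
B_{r,s}^{B}(y) + \varepsilon B_{r,s}^{B}((1-y)/\varepsilon) \;=\; B_{r,s}^{B}(y) - B_{r,s}^{B}(y-1)
\]
holds uniformly for both $\varepsilon = \pm 1$ (when $\varepsilon = -1$ one uses $\varepsilon B_{r,s}^{B}(-(1-y)) = -B_{r,s}^{B}(y-1)$). The left-hand side thus becomes $z^{r+2s}\bigl(B_{r,s}^{B}(y) - B_{r,s}^{B}(y-1)\bigr)$, and the difference equation Definition \ref{defBofB}(i) applied at $x = y - 1$ evaluates the bracket to $\frac{y^{r}-(1-y)^{r}}{2y-1}\, y^{s}(1-y)^{s}$. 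Back-substituting $y = x_{p}/z$, $1-y = \varepsilon x_{q}/z$, and $2y - 1 = (x_{p} - \varepsilon x_{q})/z$, and clearing the overall factor $z^{r+2s}$, produces exactly the asserted right-hand side; the sign factor appears naturally through $(1-y)^{s} = (\varepsilon x_{q}/z)^{s}$, which combines with $y^{s}$ to give $(\varepsilon x_{p} x_{q})^{s}$. The remaining case $\varepsilon = 0$ reduces to computing $\overline{B}_{r,s}^{B}(x_{p}, x_{p}) = x_{p}^{r+2s} B_{r,s}^{B}(1)$; from Definition \ref{defBofB}(i) at $x = 0$ one reads $B_{r,s}^{B}(1) = 1$ if $s = 0$ and $0$ otherwise, matching the $0^{s}$ factor on the right.

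\textbf{Main obstacle.} No step is genuinely deep: the proposition is essentially a homogenized restatement of the difference equation together with the oddness lemma. The only bookkeeping requiring care is checking that the reduction $B_{r,s}^{B}(y) + \varepsilon B_{r,s}^{B}((1-y)/\varepsilon) = B_{r,s}^{B}(y) - B_{r,s}^{B}(y-1)$ works uniformly for both signs of $\varepsilon$, and that the sign $\varepsilon^{s}$ emerges correctly so that the right-hand side carries $(\varepsilon x_{p} x_{q})^{s}$ rather than $(x_{p} x_{q})^{s}$ when $\varepsilon = -1$ and $s$ is odd; both facts are immediate from the homogenization formula, but deserve a line of explicit verification.
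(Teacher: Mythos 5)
Your proposal is correct and follows essentially the same route as the paper: the first congruence from $B_{r,s}^{B}(0)=0$ together with oddness, and the second by substituting $z=x_{p}+\varepsilon x_{q}$ into the homogenization, using oddness to turn the sum into the difference $B_{r,s}^{B}(y)-B_{r,s}^{B}(y-1)$, and then invoking the defining difference equation. The only cosmetic difference is that you split off the $\varepsilon=0$ case of the second congruence explicitly, whereas the paper runs the computation uniformly in $\varepsilon$; both are fine.
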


\begin{proof}
The first congruence follows from Definition
\ref{defBofB} {(ii)} and Lemma \ref{oddfunctionB}.
Let the congruent notation $\equiv$ in the following 
calculation be modulo the ideal $(x_{p}+\varepsilon x_{q}-z)$.
By Definition
\ref{defBofB} and Lemma \ref{oddfunctionB}, we have
\begin{align*}
& \overline{B}_{r,s}^{B}(x_{p}, z) + \varepsilon \overline{B}_{r,s}^{B}(x_{q}, z)
= \overline{B}_{r,s}^{B}(x_{p}, z) + \overline{B}_{r,s}^{B}( \varepsilon x_{q}, z) 
\\
&=z^{r+2s} 
\{ B_{r,s}^{B} \left(\frac{x_{p}}{z}\right) + B_{r,s}^{B} 
\left(\frac{\varepsilon x_{q}}{z}\right)\}
\\
&\equiv
(x_{p}+\varepsilon x_{q})^{r+2s} \left\{ B_{r,s}^{B}
\left(\frac{x_{p}}{x_{p}+ \varepsilon  x_{q}}
\right)
+B_{r,s}^{B}
\left(
\frac{ \varepsilon x_{q}}{x_{p}+ \varepsilon x_{q}}
\right)
 \right\} \\
&= 
(x_{p}+\varepsilon x_{q})^{r+2s} \left\{ B_{r,s}^{B}
\left(\frac{x_{p}}{x_{p}+ \varepsilon x_{q}}\right)
-B_{r,s}^{B}\left(
-\frac{ \varepsilon x_{q}}{x_{p}+ \varepsilon x_{q}}\right)
 \right\} \\
&=
(x_{p}+ \varepsilon x_{q})^{r+2s} \frac{\left( \frac{x_{p}}{x_{p}+ \varepsilon x_{q}} \right)^{r} - \left( \frac{ \varepsilon x_{q}}{x_{p}+ \varepsilon x_{q}} \right)^{r}}{\frac{x_{p}}{x_{p}+ \varepsilon x_{q}} - \frac{ \varepsilon x_{q}}{x_{p}+ \varepsilon x_{q}}}
\left( \frac{x_{p}}{x_{p}+ \varepsilon x_{q}} \right)^{s}
\left( \frac{ \varepsilon x_{q}}{x_{p}+ \varepsilon x_{q}} \right)^{s} \\
&= (x_{p}+ \varepsilon x_{q}) \frac{x_{p}^{r}- (\varepsilon x_{q})^{r}}{x_{p}- \varepsilon x_{q}} (x_{p} \cdot \varepsilon x_{q})^{s}.   
\end{align*}
\end{proof}

\begin{prop}
\label{belongB}
The derivations 
$\varphi_{j}^{B}
\,\,
(1\leq j\leq \ell)
$ 
belong to the module
$D(\Shi (B_{\ell}) )$. 
\end{prop}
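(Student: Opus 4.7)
The plan is to verify the three divisibility conditions defining $D(\Shi(B_\ell))$: (i) $z \mid \varphi_j^B(z)$, (ii) $\alpha \mid \varphi_j^B(\alpha)$ for every $\alpha \in \Phi_{+}^{B}$, and (iii) $(\alpha - z) \mid \varphi_j^B(\alpha - z)$. Since $\varphi_j^B$ contains no $\partial_z$-term, condition (i) is trivial and $\varphi_j^B(\alpha - z) = \varphi_j^B(\alpha)$, so (iii) reduces to $(\alpha - z) \mid \varphi_j^B(\alpha)$. A structural observation I will use throughout is that, for each fixed quadruple $(N_1, N_2, k_2, k_3)$, the polynomial
\[
C_{N_1, N_2, k_2, k_3} := \prod_{x_t \in N_1} x_t^2 \prod_{x_t \in N_2}(-x_t z) \cdot (-1)^{k_2+k_3} \sigma_{k_2}^{(2,j)} \tau_{k_3}^{(3,j)}
\]
multiplying $\overline{B}_{r,s}^B(x_i, z)$ in the $\partial_i$-coefficient of $\varphi_j^B$ does not depend on $i$.

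Condition (ii) for $\alpha = x_i$ is immediate from $B_{r,s}^B(0) = 0$, which gives $x_i \mid \overline{B}_{r,s}^B(x_i, z)$. For $\alpha = x_p + \varepsilon x_q$, the observation above lets me rewrite
\[
\varphi_j^B(x_p + \varepsilon x_q) = (-1)^j \sum_{N_1, N_2, k_2, k_3} C_{N_1, N_2, k_2, k_3} \bigl[\overline{B}_{r,s}^B(x_p, z) + \varepsilon \overline{B}_{r,s}^B(x_q, z)\bigr],
\]
and each summand is divisible by $x_p + \varepsilon x_q$ by the first congruence in Proposition \ref{modeqB}.

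For (iii) with $\alpha = x_i$, I evaluate the $\partial_i$-coefficient at $x_i = z$. The recurrence of Definition \ref{defBofB} at $x = 0$, combined with $B_{r,s}^B(0) = 0$, yields $B_{r,s}^B(1) = 0$ for $s \ge 1$, so only the $s = 0$ terms (those with $N_1 \sqcup N_2 = I_1^{(j)}$) survive. The sum over such partitions produces $\prod_{t = 1}^{j-1} x_t(x_t - z)$. I then split into three subcases: for $i < j$, this product already contains the factor $x_i - z$; for $i = j$, the two-term $k_2$-sum collapses because $\sigma_0(z) - \sigma_1(z)/z = 1 - 1 = 0$; for $i > j$, expanding $\tau_{k_3}^{(3,j)}|_{x_i = z} = z^2 \tau_{k_3 - 1}' + \tau_{k_3}'$, where $\tau_k'$ is the elementary symmetric polynomial in $\{x_t^2 : t > j,\ t \ne i\}$, makes the $k_3$-sum telescope to zero.

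For (iii) with $\alpha = x_p + \varepsilon x_q$, I reduce $\varphi_j^B(\alpha)$ modulo $\alpha - z$ by the second congruence in Proposition \ref{modeqB}. The sum over disjoint pairs $(N_1, N_2) \subset I_1^{(j)}$ then closes in product form: each $x_t \in I_1^{(j)}$ independently contributes one of $x_t^2$, $-x_t(x_p + \varepsilon x_q)$, or $x_p \cdot \varepsilon x_q$, yielding $\prod_{t = 1}^{j-1}(x_t - x_p)(x_t - \varepsilon x_q)$. The residual $(k_2, k_3)$-sum collapses via the identity $\sum_{k=0}^{n} (-1)^k \sigma_k(a_1, \ldots, a_n) y^{n-k} = \prod_i (y - a_i)$ into a difference of terms involving $\prod_{t > j}(x_p^2 - x_t^2)$ and $\prod_{t > j}(x_q^2 - x_t^2)$. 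A four-way case analysis on the positions of $p$ and $q$ relative to $j$ then closes the argument: $p \le j - 1$ produces a zero factor in $\prod_{t = 1}^{j-1}(x_t - x_p)$; $p = j$ kills one term through $x_p - x_j = 0$ and the other through the factor $x_q^2 - x_q^2$ hidden in $\prod_{t > j}(x_q^2 - x_t^2)$; $p, q > j$ kills both remaining terms through their respective product factors at $t = p$ and $t = q$. This final case analysis is the main technical step, though it requires no tools beyond the symmetric-function manipulations already used for $\alpha = x_i$.
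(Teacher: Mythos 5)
Your proposal is correct and follows essentially the same route as the paper's own proof: both rest on the two congruences of Proposition \ref{modeqB}, the factorization of the $(N_1,N_2)$-sum into $\prod_{t=1}^{j-1}(x_t-x_p)(x_t-\varepsilon x_q)$, the collapse of the $(k_2,k_3)$-sum via $\sum_k(-1)^k\sigma_k(a_1,\dots,a_n)y^{n-k}=\prod_i(y-a_i)$, and the same case analysis on the positions of $p,q$ relative to $j$. The only (harmless) deviation is that you check the hyperplanes $x_i=0$ and $x_i=z$ by direct evaluation and telescoping, whereas the paper absorbs them into the uniform argument by allowing $\varepsilon=0$; also, your ``four-way'' case analysis is really the paper's three cases.
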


\begin{proof}
By Proposition
\ref{modeqB}, we first have
\begin{align*}  
\varphi_{j}^{B} (x_{p} + \varepsilon x_{q}) 
&= 
(-1)^{j} 
\sum_{\substack{N_{1},N_{2} \subset I_{1}^{(j)} \\ N_{1} \cap N_{2} = \emptyset}} 
\biggl(
\prod_{x_{t} \in N_{1}} x_{t}^{2}
\biggr)
\biggl(
\prod_{x_{t} \in N_{2}} (-x_{t}z)
\biggr)
\\
&~~~~~~\sum_{\substack{0 \leq k_{2} \leq 1 \\ 0 \leq k_{3} \leq \ell -j}}
(-1)^{k_{2}+k_{3}}
\sigma_{k_{2}}^{(2,j)}
\tau_{k_{3}}^{(3,j)}
(\overline{B}_{r,s}^{B}(x_{p},z) + \varepsilon \overline{B}_{r,s}^{B}(x_{q},z))
\\
&\equiv 0 \mod (x_{p}+\varepsilon x_{q})
\end{align*}
for $1\leq j\leq \ell$.
Thus we conclude 
that $\varphi_{j}^{B}(x_{p}),\varphi_{j}^{B}(x_{p} \pm x_{q})$
are divisible by $x_{p},x_{p} \pm x_{q}$ for $1\leq p \leq \ell,1\leq p<q \leq \ell$ respectively.

Let the congruent notation $\equiv$ in the following 
calculation be modulo the ideal $(x_{p}+\varepsilon x_{q}-z)$.
By Proposition
\ref{modeqB}, for $1\leq j\leq \ell$, we also have 
\begin{align*}  
& \hspace{3ex} \varphi_{j}^{B} (x_{p}+\varepsilon x_{q} -z) 
= \varphi_{j}^{B} (x_{p}+\varepsilon x_{q}) \\
&= (-1)^{j} 
\sum_{\substack{N_{1},N_{2} \subset I_{1}^{(j)} \\ N_{1} \cap N_{2} = \emptyset}} 
\biggl(
\prod_{x_{t} \in N_{1}} x_{t}^{2}
\biggr)
\biggl(
\prod_{x_{t} \in N_{2}} (-x_{t}z)
\biggr)
\\
& ~~~~~
\sum_{\substack{0 \leq k_{2} \leq 1 \\ 0 \leq k_{3} \leq \ell -j}}
(-1)^{k_{2}+k_{3}}
\sigma_{k_{2}}^{(2,j)}
\tau_{k_{3}}^{(3,j)}
(\overline{B}_{r,s}^{B}(x_{p},z) + \varepsilon \overline{B}_{r,s}^{B}(x_{q},z))
\\
&\equiv 
(-1)^{j} 
\sum_{\substack{N_{1},N_{2} \subset I_{1}^{(j)} \\ N_{1} \cap N_{2} = \emptyset}} 
\biggl(
\prod_{x_{t} \in N_{1}} x_{t}^{2}
\biggr)
\biggl(
\prod_{x_{t} \in N_{2}} (-x_{t}(x_{p}+\varepsilon x_{q}))
\biggr)
\\
& \hspace{3em}
\sum_{\substack{0 \leq k_{2} \leq 1 \\ 0 \leq k_{3} \leq \ell -j}}
(-1)^{k_{2}+k_{3}}
\sigma_{k_{2}}^{(2,j)}
\tau_{k_{3}}^{(3,j)}
(x_{p}+\varepsilon x_{q})
\frac{x_{p}^{r}-(\varepsilon x_{q})^{r}}{x_{p}-\varepsilon x_{q}}
(x_{p} \cdot \varepsilon x_{q})^{s}
\\
&= (x_{p}+\varepsilon x_{q})
\sum_{\substack{N_{1},N_{2} \subset I_{1}^{(j)} \\ N_{1} \cap N_{2} = \emptyset}} 
\biggl(
\prod_{x_{t} \in N_{1}} x_{t}^{2}
\biggr)
\biggl(
\prod_{x_{t} \in N_{2}} (-x_{t}(x_{p}+\varepsilon x_{q}))
\biggr)
(x_{p} \cdot \varepsilon x_{q})^{s}
\\
& \hspace{5em}
\frac{(-1)^{\ell+1}}{x_{p}-\varepsilon x_{q}}
\biggl\{
\sum_{\substack{0 \leq k_{2} \leq 1 \\ 0 \leq k_{3} \leq \ell -j}}
(-1)^{\ell -j+1-k_{2}-k_{3}} 
\sigma_{k_{2}}^{(2,j)}
\tau_{k_{3}}^{(3,j)}
x_{p}^{r} 
\\
&\hspace{8em}
-\sum_{\substack{0 \leq k_{2} \leq 1 \\ 0 \leq k_{3} \leq \ell -j}}
(-1)^{\ell -j+1-k_{2}-k_{3}} 
\sigma_{k_{2}}^{(2,j)}
\tau_{k_{3}}^{(3,j)}
(\varepsilon x_{q})^{r}
\biggr\}.
\end{align*}
Here,
\begin{align*}
& \sum_{\substack{N_{1},N_{2} \subset I_{1}^{(j)} \\ N_{1} \cap N_{2} = \emptyset}} 
\biggl(
\prod_{x_{t} \in N_{1}} x_{t}^{2}
\biggr)
\biggl(
\prod_{x_{t} \in N_{2}} (-x_{t}(x_{p}+\varepsilon x_{q}))
\biggr)
(x_{p} \cdot \varepsilon x_{q})^{s}
\\
&= \prod_{t=1}^{j-1} (x_{t}^2-(x_{p}+\varepsilon x_{q})x_{t}+x_{p} \cdot \varepsilon x_{q})
= \prod_{t=1}^{j-1} (x_{t}-x_{p})(x_{t}-\varepsilon x_{q})
\end{align*}
and
\begin{align*}
& \sum_{\substack{0 \leq k_{2} \leq 1 \\ 0 \leq k_{3} \leq \ell -j}}
(-1)^{\ell -j+1-k_{2}-k_{3}} 
\sigma_{k_{2}}^{(2,j)}
\tau_{k_{3}}^{(3,j)}
x_{p}^{r} 
\\
&= x_{p} \sum_{k_{2}=0}^{1}
\sigma_{k_{2}}^{(2,j)} (-x_{p})^{1-k_{2}}
\sum_{k_{3}=0}^{\ell -j}
\tau_{k_{3}}^{(3,j)} (-x_{p}^2)^{\ell -j-k_{3}}
= x_{p} (x_{j}-x_{p}) \prod_{t=j+1}^{\ell} (x_{t}^{2}-x_{p}^{2}).
\end{align*}
If $1 \leq p \leq j-1$, then
\[
\prod_{t=1}^{j-1} (x_{t}-x_{p})(x_{t}-\varepsilon x_{q})
= 0.
\]
If $j \leq p < q \leq \ell$, then
\[
x_{p} (x_{j}-x_{p}) 
\biggl(
\prod_{t=j+1}^{\ell} (x_{t}^{2}-x_{p}^{2})
\biggr) 
=
\varepsilon x_{q} (x_{j}-\varepsilon x_{q}) 
\biggl(
\prod_{t=j+1}^{\ell} (x_{t}^{2}-(\varepsilon x_{q})^{2})
\biggr)
= 0.
\]
Therefore
\begin{align*}
& \hspace{3ex} 
\varphi_{j}^{B}(x_{p}+\varepsilon x_{q}-z)
\\
& \equiv
(-1)^{\ell +1}
\frac{x_{p}+\varepsilon x_{q}}{x_{p}-\varepsilon x_{q}}
\prod_{t=1}^{j-1} (x_{t}-x_{p})(x_{t}-\varepsilon x_{q})
\\
& \hspace{3em}
\biggl\{
x_{p} (x_{j}-x_{p}) 
\biggl(
\prod_{t=j+1}^{\ell} (x_{t}^{2}-x_{p}^{2})
\biggr) 
-\varepsilon x_{q} (x_{j}-\varepsilon x_{q}) 
\biggl(
\prod_{t=j+1}^{\ell} (x_{t}^{2}-(\varepsilon x_{q})^{2})
\biggr)
\biggr\}
\\
& =0
\end{align*}
for all pairs $(p,q)$ with $1\leq p < q \leq \ell $ 
and $\varepsilon \in \{ -1,0,1 \}$. 
Hence $\varphi_{j}^{B} \in D(\mathcal{S}(B_{\ell}))$ for $1\leq j\leq \ell$.
\end{proof}

\section{The $W$-equivariance}
Recall that
$\A(\Phi)$ is the Weyl arrangement in $E$ corresponding
to the irreducible
root system $\Phi.$
Then we may identify
$$
\overline{S} := S/zS
\simeq{\mathbb R}[x_{1} , \dots , x_{\ell}]$$
with the algebra of polynomial functions on $E$.
In
\cite{ST98}
L. Solomon and H. Terao studied 
the $\overline{S}$-module
\[ 
D(\mathcal{A}(\Phi), 2) 
:= \{ \theta \in \mathrm{Der}(\overline{S}) \mid 
\theta (\alpha_{H}) \in \overline{S} \alpha_{H}^{2}, H \in \mathcal{A}(\Phi)
 \},
\]
which was denoted by $E(\A)$ in 
\cite{ST98}.
Let $h$ be the Coxeter number for $\Phi$.  Define
\[ 
D(\mathcal{A}(\Phi), 2)_{h}  
:= \{ \theta \in 
D(\mathcal{A}(\Phi), 2)
\mid 
\deg \theta=h \}
\cup \{ 0 \},
\]
which is a real
vector space.  Note that the Weyl group $W$ corresponding to $\Phi$
naturally acts on
$D(\mathcal{A}(\Phi), 2)$ and
$D(\mathcal{A}(\Phi), 2)_{h} $.
We also define an $S$-submodule 
\[
D_{0} (\Shi(\Phi)):= \{\varphi\in D(\Shi(\Phi)) \mid \varphi(z)=0\}
\]
of 
$D (\Shi(\Phi))$.
 Then $D(\Shi(\Phi))$ has a decomposition
\[
D(\Shi(\Phi))=
D_{0} (\Shi(\Phi))
\oplus
S \theta_{E}
\]
over $S$. 
Let
\[
D_{0} (\Shi(\Phi))_{h} := 
\{\varphi\in D_{0} (\Shi(\Phi)) \mid \deg \varphi = h\}
\cup \{ 0 \},
\]
which is a real vector space.
If $\varphi\in D_{0} ({\mathcal S}(\Phi))$, 
then
$\varphi(\alpha_{H}) \in \alpha_{H}(\alpha_{H}-z) S$
 for any $H \in \mathcal{A}(\Phi)$.
Let $\overline{\varphi} := \varphi|_{z=0}$
be the restriction of $\varphi$ to $z=0$.
Then
$\overline{\varphi}(\alpha_{H}) \in \alpha_{H}^{2} \overline{S}$ 
for any $H \in 
\mathcal{A}(\Phi)$,
hence $\overline{\varphi} \in D(\mathcal{A}(\Phi), 2)$.  

\begin{theorem}
\label{Wequivariance}

(1) (L. Solomon-H. Terao\cite{ST98})
The $\overline{S} $-module $D(\mathcal{A}(\Phi), 2)$ 
 is a free module with a basis consisting of
$\ell$  derivations homogeneous of degree $h$.
In other words, we have an isomorphism
\[
D(\mathcal{A}(\Phi), 2)
\simeq 
D(\mathcal{A}(\Phi), 2)_{h} \otimes_{\mathbb R} \overline{S}.
\]

(2)
(M. Yoshinaga\cite{Y04})
 The $S$-module  $D_{0} (\Shi(\Phi))$ 
is a free 
module with a basis consisting of
$\ell$  derivations homogeneous of degree $h$.
In other words, we have an isomorphism
\[
D_{0}(\Shi(\Phi))\simeq D(\Shi(\Phi))_{h} \otimes_{\mathbb R} S.
\]
Also the restriction map
\[
\rho:
D_{0} (\Shi(\Phi))_{h} \longrightarrow 
D(\mathcal{A}(\Phi), 2)_{h} 
\]
 defined by $\varphi \mapsto \overline{\varphi}={\varphi|_{z=0} }$ 
is a linear isomorphism.
\end{theorem}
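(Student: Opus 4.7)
Since parts of this statement are attributed to prior work, my plan is to invoke those references for the freeness claims and focus the genuine argument on the isomorphism $\rho$, which is what remains after the two structure theorems are in hand.

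For part~(1), I would simply cite the Solomon--Terao theorem: $D(\A(\Phi),2)$ is free over $\bar S$ with $\ell$ generators, and a degree count ($2|\Phi_{+}| = \ell h$) combined with the fact that $\ell$ independent homogeneous degree-$h$ elements can be exhibited (via the primitive derivation applied to the basic $W$-invariants) forces every element of a homogeneous free basis to have degree exactly $h$. This gives the isomorphism $D(\A(\Phi),2) \simeq D(\A(\Phi),2)_{h} \otimes_{\mathbb R} \bar S$.

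For the freeness in part~(2), Yoshinaga's theorem asserts that $\Shi(\Phi)$ is free with exponents $(1,h,h,\dots,h)$; since $\theta_{E}$ has degree $1$ and the decomposition $D(\Shi(\Phi)) = D_{0}(\Shi(\Phi)) \oplus S\theta_{E}$ was already established in the text, $D_{0}(\Shi(\Phi))$ is free of rank $\ell$ with a homogeneous basis entirely in degree $h$; in particular $\dim_{\mathbb R} D_{0}(\Shi(\Phi))_{h} = \ell$. For $\rho$, well-definedness was verified in the paragraph preceding the theorem, and by part~(1) the target $D(\A(\Phi),2)_{h}$ is also $\ell$-dimensional over $\mathbb R$, so it suffices to prove injectivity. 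Suppose $\varphi \in D_{0}(\Shi(\Phi))_{h}$ with $\bar\varphi = 0$; then every $\varphi(x_{i})$ is divisible by $z$ in $S$, and because $\varphi(z)=0$ I can factor $\varphi = z\psi$ for some homogeneous derivation $\psi$ of degree $h-1$. Since $z$ is coprime to each $\alpha$ and to each $\alpha-z$ in $S$, the divisibility condition on $\varphi(\alpha) = z\psi(\alpha)$ by $\alpha$ forces $\alpha \mid \psi(\alpha)$, and similarly $(\alpha - z) \mid \psi(\alpha - z)$; together with $\psi(z)=0$ this gives $\psi \in D_{0}(\Shi(\Phi))_{h-1}$. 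But the exponent list $(1,h,\dots,h)$ has no entry equal to $h-1$ when $h \geq 2$, so $D_{0}(\Shi(\Phi))$ has no nonzero homogeneous element in degree $h-1$, whence $\psi = 0$ and $\varphi = 0$.

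The main obstacle, were one not to invoke the two cited structure theorems, would lie in part~(1): the Solomon--Terao result depends on the rather delicate theory of the primitive derivation and a Saito-criterion Jacobian computation, and Yoshinaga's freeness of the Shi cone relies on an additional Ziegler-restriction argument. Within the scope of the present plan, however, the only new content is the injectivity of $\rho$, and that reduces to the short factorization $\varphi = z\psi$ plus the observation that the exponent list has a gap between $1$ and $h$.
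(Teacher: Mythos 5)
Your proposal is correct, and in fact it does more than the paper does: the paper states Theorem \ref{Wequivariance} purely as an attributed background result (Solomon--Terao for (1), Yoshinaga for (2)) and supplies no proof at all, so there is nothing internal to compare against. Your treatment of the freeness claims by citation matches the paper's stance. The one piece you actually argue --- that $\rho$ is a linear isomorphism --- is sound: well-definedness is the computation already carried out in the paragraph preceding the theorem; both $D_{0}(\Shi(\Phi))_{h}$ and $D(\A(\Phi),2)_{h}$ are $\ell$-dimensional once the two freeness statements (with all basis degrees equal to $h$) are granted; and your injectivity argument is the right one. In detail, if $\overline{\varphi}=0$ and $\varphi(z)=0$, then $\varphi=\sum_{i}\varphi(x_{i})\partial_{i}$ with each $\varphi(x_{i})\in zS$, so $\varphi=z\psi$ with $\psi$ homogeneous of degree $h-1$; since $z$ is coprime to every $\alpha$ and every $\alpha-z$, the conditions $\alpha\mid z\psi(\alpha)$ and $(\alpha-z)\mid z\psi(\alpha-z)$ push down to $\psi$, giving $\psi\in D_{0}(\Shi(\Phi))_{h-1}$; and a graded free module generated entirely in degree $h$ has no nonzero element of degree $h-1$, so $\psi=0$. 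The only step worth flagging as slightly compressed is the passage from ``$D(\Shi(\Phi))$ is free with exponents $(1,h,\dots,h)$'' plus the splitting $D(\Shi(\Phi))=D_{0}(\Shi(\Phi))\oplus S\theta_{E}$ to ``$D_{0}(\Shi(\Phi))$ is free with all generators in degree $h$'': this needs the (standard, graded) fact that a graded direct summand of a graded free module is graded free, together with a Poincar\'e series comparison to pin the degrees; it is routine but should be said if you want the argument self-contained.
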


Suppose that $\Phi$ is of the type $B_{\ell}$
in the rest of this section.
Then   we may define an explicit  
$\mathbb{R}$-linear map 
$$
\Psi : E^{*} \rightarrow D_{0} (\Shi(B_{\ell} ))_{h} 
$$
 by
$$\Psi (x_{j}) = {\varphi}_{j}^{B}
\,\,\,\,
(1\le j\le \ell)$$
using the derivations
$
{\varphi}_{1}^{B}, 
\dots,  
{\varphi}_{\ell}^{B}
$ in Definition \ref{basisdefinition}.

%
%
%
%
%
%
%

\begin{theorem}
\label{unique}
Let $\Phi$ be a root system of the type $B_{\ell}$. 

(1)
The map $$
\Xi : E^{*} \rightarrow D(\mathcal{A}(B_{\ell} ), 2)_{h}
$$
defined by $\Xi  = \rho\circ\Psi$ is a $W$-equivariant isomorphism.
%

(2)
The map 
$$\Psi : E^{*} \rightarrow D_{0} (\Shi(B_{\ell} ))_{h} 
$$ is a linear isomorphism. 
\end{theorem}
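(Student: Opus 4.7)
The plan is to derive a closed-form expression for the restriction $\overline{\varphi}_j^B = \Xi(x_j)$ from which the $W$-equivariance and bijectivity of $\Xi$ in (1) can be read off directly; (2) then follows immediately, since $\rho$ is an $\R$-linear isomorphism by Theorem \ref{Wequivariance}(2), so $\Psi = \rho^{-1} \circ \Xi$ inherits bijectivity from $\Xi$.

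To compute $\overline{\varphi}_j^B$, I would set $z=0$ in Definition \ref{basisdefinition}. Every term with $N_2 \neq \emptyset$ vanishes, and $\overline{B}_{r,s}^B(x_i,0)$ equals the leading coefficient of $B_{r,s}^B(x)$ times $x_i^{\deg B_{r,s}^B}$, which vanishes unless $r$ is odd, i.e.\ unless $k_2 = 1$. The defining recurrence \ref{defBofB}(i) gives this leading coefficient as $(-1)^s / (r+2s)$. Introducing the complementary elementary symmetric polynomial
\[
e_m^{(j)} := \sigma_m\bigl(x_1^2, \ldots, \widehat{x_j^2}, \ldots, x_\ell^2\bigr),
\]
the nested sum over $(N_1, k_3)$ with $|N_1| + k_3 = m$ collapses into $e_m^{(j)}$ and all signs coalesce to $(-1)^m$, producing the clean formula
\[
\overline{\varphi}_j^B \;=\; x_j \sum_{i=1}^{\ell} \sum_{m=0}^{\ell-1} \frac{(-1)^m \, e_m^{(j)}}{2\ell - 1 - 2m} \, x_i^{2\ell - 1 - 2m}\, \partial_i.
\]

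From this formula the remaining arguments are direct. The Weyl group $W_B$ is generated by sign changes $\epsilon_t$ and transpositions $s_{p,q}$; the inner derivation $\sum_i x_i^{2\ell-1-2m}\partial_i$ is stabilized by both (the exponent is odd), and $e_m^{(j)}$ depends only on the squares of the variables other than $x_j$. Hence $\overline{\varphi}_j^B$ is fixed by every $\epsilon_t$ and $s_{p,q}$ with $t, p, q \neq j$, it is negated by $\epsilon_j$, and under $s_{p,j}$ it is sent to $\overline{\varphi}_p^B$ because $s_{p,j}$ exchanges $e_m^{(j)}$ with $e_m^{(p)}$ and $x_j$ with $x_p$. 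This matches the $W$-action on $E^*$ exactly, yielding $W$-equivariance of $\Xi$. For injectivity, observe that the coefficient of $\partial_j$ in $\overline{\varphi}_j^B$ contains the monomial $x_j^{2\ell}/(2\ell-1)$ (from the $i=j$, $m=0$ term), while the coefficient of $\partial_j$ in $\overline{\varphi}_k^B$ with $k \neq j$ is divisible by $x_k$; consequently $\overline{\varphi}_1^B, \ldots, \overline{\varphi}_\ell^B$ are $\R$-linearly independent. Combined with $\dim_\R D(\A(B_\ell),2)_h = \ell$ from Theorem \ref{Wequivariance}(1), the map $\Xi$ is a linear isomorphism, completing (1).

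The main obstacle is the algebraic simplification in the second step: correctly identifying which contributions survive at $z=0$, computing the leading coefficient of $B_{r,s}^B(x)$ via its Bernoulli-type recurrence, and recognizing the consolidation of the $(N_1, k_3)$-sum into the single polynomial $e_m^{(j)}$. Once that identity is established the remaining verifications are pure symmetric-function bookkeeping.
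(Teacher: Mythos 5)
Your proposal is correct and follows essentially the same route as the paper: both compute the restriction $\Xi(x_j)=\varphi_j^B|_{z=0}$ explicitly (your closed formula with $e_m^{(j)}$ agrees with the paper's, including the leading coefficient $(-1)^s/(r+2s)$ of $B_{r,s}^B$ and the observation that only $k_2=1$, $N_2=\emptyset$ terms survive), and both then read off $W$-equivariance from that formula. The only difference is the final bijectivity step, where the paper invokes Schur's lemma on the irreducible representation $E^*$ while you give a direct linear-independence argument via the monomial $x_j^{2\ell}$ plus the dimension count from Theorem \ref{Wequivariance}(1); both are valid.
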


\begin{proof}
%

(1)
Since
\[
\overline{B}_{r,s}^{B} (x_{i},0) 
=
\begin{cases}
(-1)^{s}x_{i}^{r+2s} / (r+2s)& (r:\text{odd number}) \\
0 & (r:\text{even number})
\end{cases},
\]
\begin{align*}
\Xi(x_{j})(x_{i}) &= (\rho\circ \Psi(x_{j}))(x_{i})
=
\varphi_{j}^{B}  (x_{i})|_{z=0} 
\\
&= 
(-1)^{j} 
x_{j}
\sum_{N_{1} \subset I_{1}^{(j)}}
\biggl(
\prod_{x_{t} \in N_{1}} x_{t}^{2}
\biggr)
\sum_{k_{3}=0}^{\ell -j}
(-1)^{1+k_{3}}
\tau_{k_{3}}^{(3,j)}
(-1)^{s}
\frac{x_{i}^{r+2s}}{r+2s}
\\
&= 
(-1)^{j} 
x_{j}
\sum_{m=0}^{j-1}
\sum_{\substack{N_{1} \subset I_{1}^{(j)} \\ |N_{1}|=m}}
\biggl(
\prod_{x_{t} \in N_{1}} x_{t}^{2}
\biggr)
\sum_{k_{3}=0}^{\ell -j}
(-1)^{1+k_{3}}
\tau_{k_{3}}^{(3,j)}
(-1)^{s}
\frac{x_{i}^{r+2s}}{r+2s}
\\
&= x_{j}
\sum_{m=0}^{j-1}
\tau_{m}^{(1,j)}
\sum_{k_{3}=0}^{\ell -j}
(-1)^{m+k_{3}}
\tau_{k_{3}}^{(3,j)}
\frac{x_{i}^{2\ell -2m-2k_{3}-1}}{2\ell -2m-2k_{3}-1}
\\
&= x_{j}
\sum_{k=0}^{\ell -1}
(-1)^{k}
\sigma_{k}(x_{1}^{2},\ldots ,x_{j-1}^{2},x_{j+1}^{2},\ldots ,x_{\ell}^{2})
\frac{x_{i}^{2\ell -2k-1}}{2\ell -2k-1}.
\end{align*}
Thus we obtain
\begin{align*}
\Xi (x_{j})
= x_{j}
\sum_{k=0}^{\ell -1}
(-1)^k
\sigma_{k}(x_{1}^{2},\ldots ,x_{j-1}^{2},x_{j+1}^{2},\ldots ,x_{\ell}^{2})
\sum_{i=1}^{\ell} 
 \left(\frac{x_{i}^{2\ell -2k -1}}{2\ell -2k -1}\right) \partial_{i}. 
\end{align*}
Since
$$\sum_{i=1}^{\ell} 
 \left(\frac{x_{i}^{2\ell -2k -1}}{2\ell -2k -1}\right) \partial_{i}$$
is a $W$-invariant derivation
and the correspondence  
\[
x_{j} \mapsto x_{j}\sigma_{k}(x_{1}^{2},\ldots ,x_{j-1}^{2},x_{j+1}^{2},\ldots ,x_{\ell}^{2}) 
\,\,\,\,\,\,
(0\le k\le \ell-1)
\]
is $W$-equivariant for every $k\in{\mathbb Z}_{\ge 0} $, 
we conclude that
$\Xi$ is  $W$-equivariant.
Therefore
$\Xi$ is bijective
by Schur's lemma.

(2) follows from (1) because the restriction map $\rho$ is bijective by Theorem \ref{Wequivariance} (2).
\end{proof}

\begin{theorem}
\label{mainB} 
The derivations $\theta_{E}, \varphi_{1}^{B}, \dots, \varphi_{\ell}^{B}$ 
form a basis for $D(\Shi(B_{\ell}))$. 
\end{theorem}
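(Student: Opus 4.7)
At this point the conclusion should follow by assembling results already established, so the plan is simply to trace through the filtration of statements built up in Section 2 and Section 3.

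First I would observe that each $\varphi_{j}^{B}$ lies in $D_{0}(\Shi(B_{\ell}))$. By Proposition \ref{belongB} we already know $\varphi_{j}^{B}\in D(\Shi(B_{\ell}))$, and inspecting Definition \ref{basisdefinition} one sees that the expansion of $\varphi_{j}^{B}$ contains no $\partial_{z}$-term, so $\varphi_{j}^{B}(z)=0$. Next, one checks the degree count: the factors $\prod_{N_{1}}x_{t}^{2}\prod_{N_{2}}(-x_{t}z)$ contribute total degree $2|N_{1}|+2|N_{2}|$, the factor $\sigma_{k_{2}}^{(2,j)}\tau_{k_{3}}^{(3,j)}$ contributes $k_{2}+2k_{3}$, and $\overline{B}_{r,s}^{B}(x_{i},z)$ is homogeneous of degree $r+2s=2\ell-2j-k_{2}-2k_{3}+2+2(j-1-|N_{1}|-|N_{2}|)$; summing and adding $1$ for $\partial_{i}$ cancels everything down to the Coxeter number $h=2\ell$. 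Thus $\varphi_{j}^{B}\in D_{0}(\Shi(B_{\ell}))_{h}$ for $1\le j\le \ell$.

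Now Theorem \ref{unique}(2) asserts that the $\mathbb{R}$-linear map $\Psi: E^{*}\to D_{0}(\Shi(B_{\ell}))_{h}$ sending $x_{j}\mapsto \varphi_{j}^{B}$ is an isomorphism. Consequently $\varphi_{1}^{B},\dots,\varphi_{\ell}^{B}$ are an $\mathbb{R}$-basis of the finite-dimensional vector space $D_{0}(\Shi(B_{\ell}))_{h}$. Applying Theorem \ref{Wequivariance}(2), which gives the $S$-module isomorphism $D_{0}(\Shi(B_{\ell}))\simeq D_{0}(\Shi(B_{\ell}))_{h}\otimes_{\mathbb{R}}S$, this $\mathbb{R}$-basis lifts to an $S$-basis of $D_{0}(\Shi(B_{\ell}))$.

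Finally, I would invoke the decomposition
\[
D(\Shi(B_{\ell}))=D_{0}(\Shi(B_{\ell}))\oplus S\theta_{E}
\]
recorded just before Theorem \ref{Wequivariance}: adjoining the Euler derivation $\theta_{E}$ to the $S$-basis $\varphi_{1}^{B},\dots,\varphi_{\ell}^{B}$ of $D_{0}(\Shi(B_{\ell}))$ yields an $S$-basis of $D(\Shi(B_{\ell}))$. No genuine obstacle remains at this final stage; all the real content has been pushed into Proposition \ref{belongB} (membership in the module) and into Theorem \ref{unique} (the $W$-equivariant identification via the restriction $\rho$), and the present theorem is the clean packaging of those two facts.
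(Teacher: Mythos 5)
Your proof is correct and follows exactly the paper's own argument: reduce to $D_{0}(\Shi(B_{\ell}))$ via the decomposition $D(\Shi(B_{\ell}))=D_{0}(\Shi(B_{\ell}))\oplus S\theta_{E}$, then combine Theorem \ref{unique}(2) with Theorem \ref{Wequivariance}(2); you merely spell out the degree bookkeeping that the paper leaves implicit. (One trivial slip: the degree of a derivation is the common degree of its coefficient polynomials, which is already $2\ell$ — there is no ``$+1$ for $\partial_{i}$'' — but this does not affect the argument.)
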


\begin{proof}
It is enough to show that
$\varphi_{1}^{B}, \dots, \varphi_{\ell}^{B}$ 
form a basis for $D_{0} (\Shi(B_{\ell}))$.
Recall that each $\Psi(x_{j})
=\varphi_{j}^{B}$ belongs to 
$D_{0} (\Shi(B_{\ell}))_{h} $.
Theorems \ref{Wequivariance} (2) and \ref{unique} (2) complete the proof.
\end{proof}

\section{A basis construction for the type $C_{\ell}$}

\begin{define} 
For $(r,s) \in \mathbb{Z}_{> 0}\times \mathbb{Z}_{\ge 0}$, 
define a polynomial
$B_{r,s}^{C} (x)$ in $x$ satisfying the following two conditions:\\

(i) 
$B_{r,s}^{C} (x+1)-B_{r,s}^{C} (x)
=\{ (x+1)^{r-1}+(-x)^{r-1} \} (x+1)^{s} (-x)^{s}$,\\

(ii) $B_{r,s}^{C} (0)=0.$
\end{define}

It is easy to see that 
$B_{r,s}^{C} (x)$
uniquely exists and
$$
\deg B_{r,s}^{C} (x)
=
\begin{cases}
r+2s   &{\rm ~if~} r {\rm ~is~odd},\\
r+2s-1 &{\rm ~if~} r {\rm ~is~even}.
\end{cases}  
$$
The following lemma can be proved by a smilar argument to
the proof of Lemma \ref{oddfunctionB}: 

\begin{lemma}
\label{oddfunctionC}
$B_{r,s}^{C} (x)$ is an odd function.
\end{lemma}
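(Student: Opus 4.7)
The plan is to follow exactly the template used in the proof of Lemma \ref{oddfunctionB}, since the two definitions of $B_{r,s}^{B}(x)$ and $B_{r,s}^{C}(x)$ differ only in the explicit form of the right-hand side of condition (i), while both share the boundary condition $B_{r,s}(0)=0$. The key observation that makes the argument work is that in the $C$ case too, the right-hand side of condition (i) is invariant under the substitution $x \mapsto -x-1$: indeed, this substitution swaps $(x+1)$ and $(-x)$, and each of the three factors $(x+1)^{r-1}+(-x)^{r-1}$, $(x+1)^{s}$, $(-x)^{s}$ is symmetric in the pair $\{(x+1),(-x)\}$ up to the rearrangement of the last two.

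First I would replace $x$ with $-x-1$ in condition (i) of the definition of $B_{r,s}^{C}$ to obtain
\[
B_{r,s}^{C}(-x) - B_{r,s}^{C}(-x-1) = \{(-x)^{r-1} + (x+1)^{r-1}\}(-x)^{s}(x+1)^{s},
\]
whose right-hand side coincides with that of the original condition (i). Subtracting the two identities yields
\[
B_{r,s}^{C}(x+1) + B_{r,s}^{C}(-x-1) = B_{r,s}^{C}(x) + B_{r,s}^{C}(-x).
\]
Setting $F(x):=B_{r,s}^{C}(x)+B_{r,s}^{C}(-x)$, this is precisely the periodicity $F(x+1)=F(x)$.

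Next I would use condition (ii), $B_{r,s}^{C}(0)=0$, which gives $F(0)=0$. By the periodicity just established, $F(n)=F(0)=0$ for every $n\in\mathbb{Z}_{\geq 0}$. Since $F$ is a polynomial vanishing at infinitely many points, $F\equiv 0$, i.e.\ $B_{r,s}^{C}(-x)=-B_{r,s}^{C}(x)$, so $B_{r,s}^{C}(x)$ is odd.

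No step here is a genuine obstacle; the only thing one must actually check is the symmetry of the right-hand side of condition (i) under $x\mapsto -x-1$, which is immediate from inspection. This is what the paper means by ``a similar argument to the proof of Lemma \ref{oddfunctionB}''.
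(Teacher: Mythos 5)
Your proof is correct and is exactly the argument the paper intends: the paper's own ``proof'' of Lemma \ref{oddfunctionC} is just the remark that one imitates Lemma \ref{oddfunctionB}, and you have carried out that imitation faithfully, with the only substantive check being the invariance of the right-hand side of condition (i) under $x \mapsto -x-1$.
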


\begin{define}
The homogenization $\overline{B}_{r,s}^{C} (x,z)$ of $B_{r,s}^{C} (x)$ is defined by
\[ \overline{B}_{r,s}^{C} (x,z) := z^{r+2s} B_{r,s}^{C} (x/z). \]
\end{define}

\begin{define}
\label{basisdefinitionC} 
Define homogeneous derivations 
\begin{multline*}
\varphi_{j}^{C}
:=
(-1)^{j} 
\sum_{i=1}^{\ell }
\biggl\{ 
\sum_{\substack{N_{1},N_{2} \subset I_{1}^{(j)} \\ N_{1} \cap N_{2} = \emptyset}}
\biggl(
\prod_{x_{t} \in N_{1}} x_{t}^{2}
\biggr)
\biggl(
\prod_{x_{t} \in N_{2}} (-x_{t}z)
\biggr)
\\
\sum_{ \substack{0 \leq k_{2} \leq 1 \\ 0 \leq k_{3} \leq \ell -j }}
(-1)^{k_{2}+k_{3}}
\sigma_{k_{2}}^{(2,j)} \tau_{k_{3}}^{(3,j)}
\ 
\overline{B}_{r, s}^{C} (x_{i},z) 
\biggr\} \partial_{i}
\end{multline*}
where
\[ r := 2\ell -2j-k_{2}-2k_{3}+2\ge 1, \,\,\,
s := |I_{1}^{(j)} \setminus (N_{1} \cup N_{2})|
= (j-1)-|N_{1}|-|N_{2}|\ge 0
\]
for
$1\leq j \leq \ell$.
\end{define}

Note that 
$\varphi_{j}^{C}$
 is exactly the same as
$\varphi_{j}^{B}$
with only one exception:
the use of 
$B_{r,s}^{C}(x_{i},z)$
instead of
$B_{r,s}^{B}(x_{i},z)$.
Thus each $\varphi_{j}^{B}$ is 
homogeneous derivation of degree $2\ell$ which is equal to 
the
Coxeter number for $C_{\ell}$.  
We will prove
that  the derivations
$\theta_{E}$
and
$\varphi_{1}^{C}, \dots , \varphi_{\ell}^{C}$
form a basis for $D(\Shi (C_{\ell}))$. 
We first have the following Proposition: 

\begin{prop}
\label{modeqC}
Let $\varepsilon \in \{ -1,0,1 \}$.
Then we have 
the following conguruence relations:
\[
\overline{B}_{r,s}^{C}(x_{p}, z) + \varepsilon \overline{B}_{r,s}^{C}(x_{q}, z)
\equiv 0
\mod (x_{p}+\varepsilon x_{q}),
\]
\begin{multline*} 
\overline{B}_{r,s}^{C}(x_{p}, z) + 
\varepsilon \overline{B}_{r,s}^{C}(x_{q}, z)
\equiv
(x_{p}+ \varepsilon x_{q}) 
\{ x_{p}^{r-1} + (\varepsilon x_{q})^{r-1} \} 
(x_{p} \cdot \varepsilon x_{q})^{s}\\ 
\mod (x_{p}+\varepsilon x_{q}-z).
\end{multline*} 
\end{prop}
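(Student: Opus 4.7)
The plan is to mirror the proof of Proposition \ref{modeqB} essentially verbatim, since the only structural change between the type $B$ and type $C$ polynomials is the right-hand side of the defining difference equation; the oddness (Lemma \ref{oddfunctionC}) and the normalization $B_{r,s}^{C}(0)=0$ are identical.

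For the first congruence, I would first note that, since $B_{r,s}^{C}$ is odd by Lemma \ref{oddfunctionC}, the homogenization satisfies $\overline{B}_{r,s}^{C}(-x,z) = -\overline{B}_{r,s}^{C}(x,z)$, so for any $\varepsilon \in \{-1,0,1\}$ we have $\varepsilon\,\overline{B}_{r,s}^{C}(x_{q},z) = \overline{B}_{r,s}^{C}(\varepsilon x_{q},z)$ (the case $\varepsilon = 0$ being $B_{r,s}^{C}(0)=0$). Then modulo $(x_{p}+\varepsilon x_{q})$ we have $\varepsilon x_{q} \equiv -x_{p}$, and oddness gives $\overline{B}_{r,s}^{C}(x_{p},z) + \overline{B}_{r,s}^{C}(-x_{p},z) = 0$.

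For the second congruence, working modulo $(x_{p}+\varepsilon x_{q}-z)$, I would substitute $z \equiv x_{p}+\varepsilon x_{q}$ into the definition $\overline{B}_{r,s}^{C}(x,z) = z^{r+2s}B_{r,s}^{C}(x/z)$, obtaining
\[
\overline{B}_{r,s}^{C}(x_{p},z) + \varepsilon\,\overline{B}_{r,s}^{C}(x_{q},z) \equiv (x_{p}+\varepsilon x_{q})^{r+2s}\left\{B_{r,s}^{C}(u) + B_{r,s}^{C}(1-u)\right\},
\]
where $u := x_{p}/(x_{p}+\varepsilon x_{q})$ and $1-u = \varepsilon x_{q}/(x_{p}+\varepsilon x_{q})$. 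Using oddness to rewrite $B_{r,s}^{C}(1-u) = -B_{r,s}^{C}(u-1)$, and then invoking the defining difference relation (i) with $x$ replaced by $u-1$, the brace becomes
\[
B_{r,s}^{C}(u) - B_{r,s}^{C}(u-1) = \{u^{r-1} + (1-u)^{r-1}\}\,u^{s}(1-u)^{s}.
\]
Clearing denominators, the factor $(x_{p}+\varepsilon x_{q})^{r+2s}$ exactly cancels all the denominators from $u$ and $1-u$, leaving a single net power $(x_{p}+\varepsilon x_{q})^{r+2s-(r-1)-2s} = (x_{p}+\varepsilon x_{q})$, which yields the claimed expression $(x_{p}+\varepsilon x_{q})\{x_{p}^{r-1}+(\varepsilon x_{q})^{r-1}\}(x_{p}\cdot\varepsilon x_{q})^{s}$.

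No step here is truly hard; the only bookkeeping care is in tracking the exponent $r+2s-(r-1)-2s = 1$ after canceling denominators, and in correctly applying the type $C$ difference formula (which produces a sum $u^{r-1}+(1-u)^{r-1}$ rather than the ratio $(u^{r}-(1-u)^{r})/(u-(1-u))$ that appeared in the type $B$ computation). The cleanest presentation is to display the chain of equalities exactly as in the proof of Proposition \ref{modeqB}, changing only the final equality to reflect the new right-hand side.
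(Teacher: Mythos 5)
Your proposal is correct and follows essentially the same route as the paper, which itself proves Proposition \ref{modeqC} simply by instructing the reader to imitate the proof of Proposition \ref{modeqB}: oddness plus the normalization at $0$ give the first congruence, and substituting $z\equiv x_{p}+\varepsilon x_{q}$ into the homogenization and applying the difference relation (i) at $x=u-1$ gives the second. Your exponent bookkeeping $(r+2s)-(r-1)-2s=1$ is exactly the cancellation that occurs in the displayed chain of equalities in the type $B$ proof.
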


\begin{proof}
Imitate the proof of Proposition \ref{modeqB}.
\end{proof}

\begin{prop}
The derivations 
$\varphi_{j}^{C}
\,\,
(1\leq j\leq \ell)
$ 
belong to the module
$D(\Shi (C_{\ell}) )$. 
\end{prop}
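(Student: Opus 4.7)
The plan is to mimic the proof of Proposition~\ref{belongB}, using Proposition~\ref{modeqC} in place of Proposition~\ref{modeqB}, and to add one new case to cover the roots $2x_p$ and $2x_p - z$ that are specific to type $C_\ell$.

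Since $\varphi_j^C$ has no $\partial_z$ component, $\varphi_j^C(z) = 0$. For $1 \le p < q \le \ell$ and $\varepsilon \in \{-1, 0, 1\}$, the first congruence of Proposition~\ref{modeqC} directly gives $\varphi_j^C(x_p + \varepsilon x_q) \equiv 0 \pmod{x_p + \varepsilon x_q}$; the case $\varepsilon = 0$, together with the fact that $2$ is a unit, also handles divisibility of $\varphi_j^C(2x_p)$ by $2x_p$. For the Shi hyperplanes $x_p \pm x_q - z$, combining $\varphi_j^C(z) = 0$ with the second congruence of Proposition~\ref{modeqC} and running the same manipulations as in Proposition~\ref{belongB}, the $(N_1, N_2)$-sum telescopes to $\prod_{t=1}^{j-1}(x_t - x_p)(x_t - \varepsilon x_q)$ while the $(k_2, k_3)$-sums applied to $x_p^{r-1}$ and $(\varepsilon x_q)^{r-1}$ produce $(x_p - x_j)\prod_{t=j+1}^{\ell}(x_p^2 - x_t^2)$ and $(\varepsilon x_q - x_j)\prod_{t=j+1}^{\ell}(x_q^2 - x_t^2)$ respectively; the same case analysis on $(p, q)$ as in Proposition~\ref{belongB} then forces the total to vanish. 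The only algebraic change from the $B_\ell$ argument is the replacement of $(x_p^r - (\varepsilon x_q)^r)/(x_p - \varepsilon x_q)$ by $x_p^{r-1} + (\varepsilon x_q)^{r-1}$, which does not affect the cancellation mechanism.

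The genuinely new step is the divisibility $\varphi_j^C(2x_p - z) = 2\varphi_j^C(x_p) \equiv 0 \pmod{2x_p - z}$, since Proposition~\ref{modeqC} as stated does not directly supply congruences modulo $2x_p - z$. For this I would invoke Proposition~\ref{modeqC} with $q = p$ and $\varepsilon = 1$; after dividing by the unit $2$ this yields the auxiliary congruence
\[
\overline{B}_{r,s}^C(x_p, z) \equiv 2\, x_p^{r + 2s} \pmod{2x_p - z}.
\]
Substituting this into $\varphi_j^C(x_p)$ and simultaneously replacing $z$ by $2x_p$ in the factor $\prod_{x_t \in N_2}(-x_t z)$, the $(N_1, N_2)$-sum now collapses via the product expansion $\prod_{t=1}^{j-1}(x_t^2 - 2 x_p x_t + x_p^2) = \prod_{t=1}^{j-1}(x_t - x_p)^2$, and the $(k_2, k_3)$-sums contribute $2(-1)^j x_p (x_p - x_j)\prod_{t=j+1}^{\ell}(x_p^2 - x_t^2)$ after the same exponent bookkeeping as in the $B_\ell$ proof.

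I expect the main obstacle to be verifying that the resulting product
\[
x_p (x_p - x_j) \prod_{t=1}^{j-1}(x_t - x_p)^2 \prod_{t=j+1}^{\ell}(x_p^2 - x_t^2)
\]
vanishes identically for every $p$. A three-way case analysis settles this: if $1 \le p \le j-1$ then the square $(x_p - x_p)^2$ kills it; if $p = j$ then the factor $(x_p - x_j)$ kills it; and if $j+1 \le p \le \ell$ then the factor $(x_p^2 - x_p^2)$ in the last product kills it. Combined with the preceding paragraphs, this completes the verification that $\varphi_j^C \in D(\Shi(C_\ell))$.
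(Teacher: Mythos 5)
Your proposal is correct and follows essentially the same route as the paper: the first congruence of Proposition~\ref{modeqC} handles the linear hyperplanes (with $\varepsilon=0$ covering $2x_p$), the second handles $x_p\pm x_q-z$ with the same telescoping of the $(N_1,N_2)$-sum and term-by-term vanishing of the $(k_2,k_3)$-sums, and the $2x_p-z$ case is obtained by specializing Proposition~\ref{modeqC} at $q=p$, $\varepsilon=1$ exactly as the paper does (your auxiliary congruence $\overline{B}_{r,s}^{C}(x_p,z)\equiv 2x_p^{r+2s}$ is just that specialization made explicit). Your three-way case analysis $p\le j-1$, $p=j$, $p\ge j+1$ for the final product is precisely what makes the paper's displayed expression vanish, so nothing is missing.
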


\begin{proof}
By Proposition
\ref{modeqC}, we first have
\begin{align*}  
& \varphi_{j}^{C} (x_{p} + \varepsilon x_{q}) 
\\
&= (-1)^{j} \sum_{\substack{N_{1},N_{2} \subset I_{1}^{(j)} \\ N_{1} \cap N_{2} = \emptyset}} 
\biggl(
\prod_{x_{t} \in N_{1}} x_{t}^{2}
\biggr)
\biggl(
\prod_{x_{t} \in N_{2}} (-x_{t}z)
\biggr)
\\
& \hspace{7em}
\sum_{\substack{0 \leq k_{2} \leq 1 \\ 0 \leq k_{3} \leq \ell -j}}
(-1)^{k_{2}+k_{3}}
\sigma_{k_{2}}^{(2,j)}
\tau_{k_{3}}^{(3,j)}
(\overline{B}_{r,s}^{C}(x_{p},z) + \varepsilon \overline{B}_{r,s}^{C}(x_{q},z))
\\
&\equiv 0 \ \ \ \ \ (\mathrm{mod}\ (x_{p}+\varepsilon x_{q}))
\end{align*}
for $1\leq j\leq \ell$.
Thus we 
conclude that $\varphi_{j}^{C}(2x_{p}),\varphi_{j}^{C}(x_{p} \pm x_{q})$
are divisible by $2x_{p},x_{p} \pm x_{q}$ for $1\leq p \leq \ell,1\leq p<q \leq \ell$ respectively.

Let the congruent notation $\equiv$ in the following 
calculation be modulo the ideal $(x_{p}+\varepsilon x_{q}-z)$.
By Proposition \ref{modeqC}, for $1\leq j\leq \ell$, we also have
\begin{align*}  
& \hspace{3ex} \varphi_{j}^{C} (x_{p}+\varepsilon x_{q} -z) 
=  
\varphi_{j}^{C} (x_{p}+\varepsilon x_{q}) \\
&=(-1)^{j} \sum_{\substack{N_{1},N_{2} \subset I_{1}^{(j)} \\ N_{1} \cap N_{2} = \emptyset}} 
\biggl(
\prod_{x_{t} \in N_{1}} x_{t}^{2}
\biggr)
\biggl(
\prod_{x_{t} \in N_{2}} (-x_{t}z)
\biggr)\\
& \hspace{7em}
\sum_{\substack{0 \leq k_{2} \leq 1 \\ 0 \leq k_{3} \leq \ell -j}}
(-1)^{k_{2}+k_{3}}
\sigma_{k_{2}}^{(2,j)}
\tau_{k_{3}}^{(3,j)}
(\overline{B}_{r,s}^{C}(x_{p},z) + \varepsilon \overline{B}_{r,s}^{C}(x_{q},z))
\end{align*}  
\begin{align*}  
&\equiv 
(-1)^{j} 
\sum_{\substack{N_{1},N_{2} \subset I_{1}^{(j)} \\ N_{1} \cap N_{2} = \emptyset}} 
\biggl(
\prod_{x_{t} \in N_{1}} x_{t}^{2}
\biggr)
\biggl(
\prod_{x_{t} \in N_{2}} (-x_{t}(x_{p}+\varepsilon x_{q}))
\biggr)
\\
& \hspace{4em}
\sum_{\substack{0 \leq k_{2} \leq 1 \\ 0 \leq k_{3} \leq \ell -j}}
(-1)^{k_{2}+k_{3}}
\sigma_{k_{2}}^{(2,j)}
\tau_{k_{3}}^{(3,j)}
(x_{p}+\varepsilon x_{q})
\{ 
x_{p}^{r-1}+(\varepsilon x_{q})^{r-1}
\}
(x_{p} \cdot \varepsilon x_{q})^{s}
\\
&= (x_{p}+\varepsilon x_{q})
\sum_{\substack{N_{1},N_{2} \subset I_{1}^{(j)} \\ N_{1} \cap N_{2} = \emptyset}} 
\biggl(
\prod_{x_{t} \in N_{1}} x_{t}^{2}
\biggr)
\biggl(
\prod_{x_{t} \in N_{2}} (-x_{t}(x_{p}+\varepsilon x_{q}))
\biggr)
(x_{p} \cdot \varepsilon x_{q})^{s}
\\
& \hspace{4em}
(-1)^{\ell+1}
\biggl\{
\sum_{\substack{0 \leq k_{2} \leq 1 \\ 0 \leq k_{3} \leq \ell -j}}
(-1)^{\ell -j+1-k_{2}-k_{3}} 
\sigma_{k_{2}}^{(2,j)}
\tau_{k_{3}}^{(3,j)}
x_{p}^{r-1} 
\\
&\hspace{8em}
+\sum_{\substack{0 \leq k_{2} \leq 1 \\ 0 \leq k_{3} \leq \ell -j}}
(-1)^{\ell -j+1-k_{2}-k_{3}} 
\sigma_{k_{2}}^{(2,j)}
\tau_{k_{3}}^{(3,j)}
(\varepsilon x_{q})^{r-1}
\biggr\}.
\end{align*}
Here,
\begin{align*}
& \sum_{\substack{N_{1},N_{2} \subset I_{1}^{(j)} \\ N_{1} \cap N_{2} = \emptyset}} 
\biggl(
\prod_{x_{t} \in N_{1}} x_{t}^{2}
\biggr)
\biggl(
\prod_{x_{t} \in N_{2}} (-x_{t}(x_{p}+\varepsilon x_{q}))
\biggr)
(x_{p} \cdot \varepsilon x_{q})^{s}
\\
&= \prod_{t=1}^{j-1} (x_{t}^2-(x_{p}+\varepsilon x_{q})x_{t}+x_{p} \cdot \varepsilon x_{q})
= \prod_{t=1}^{j-1} (x_{t}-x_{p})(x_{t}-\varepsilon x_{q}), 
\end{align*}
and
\begin{align*}
& \sum_{\substack{0 \leq k_{2} \leq 1 \\ 0 \leq k_{3} \leq \ell -j}}
(-1)^{\ell -j+1-k_{2}-k_{3}} 
\sigma_{k_{2}}^{(2,j)}
\tau_{k_{3}}^{(3,j)}
x_{p}^{r-1} 
\\
&= \sum_{k_{2}=0}^{1}
\sigma_{k_{2}}^{(2,j)} (-x_{p})^{1-k_{2}}
\sum_{k_{3}=0}^{\ell -j}
\tau_{k_{3}}^{(3,j)} (-x_{p}^2)^{\ell -j-k_{3}}
= (x_{j}-x_{p}) \prod_{t=j+1}^{\ell} (x_{t}^{2}-x_{p}^{2}).
\end{align*}
If $1 \leq p \leq j-1$, then
\[
\prod_{t=1}^{j-1} (x_{t}-x_{p})(x_{t}-\varepsilon x_{q})
= 0.
\]
If $j \leq p < q \leq \ell$ and $\varepsilon \in \{ -1,1 \}$, then
\[
(x_{j}-x_{p}) 
\prod_{t=j+1}^{\ell} (x_{t}^{2}-x_{p}^{2})
=
(x_{j}-\varepsilon x_{q}) 
\prod_{t=j+1}^{\ell} (x_{t}^{2}-(\varepsilon x_{q})^{2})
= 0.
\]
Therefore
\begin{align*}
& \hspace{3ex} 
\varphi_{j}^{C}(x_{p}+\varepsilon x_{q}-z)
\\
& \equiv
(-1)^{\ell -j+1}
(x_{p}+\varepsilon x_{q})
\prod_{t=1}^{j-1} (x_{t}-x_{p})(x_{t}-\varepsilon x_{q})
\\
& \hspace{3em}
\biggl\{
(x_{j}-x_{p}) 
\biggl(
\prod_{t=j+1}^{\ell} (x_{t}^{2}-x_{p}^{2})
\biggr) 
+(x_{j}-\varepsilon x_{q}) 
\biggl(
\prod_{t=j+1}^{\ell} (x_{t}^{2}-(\varepsilon x_{q})^{2})
\biggr)
\biggr\}
\\
&=0
\end{align*}
for all pairs $(p,q)$ with $1\leq p < q \leq \ell $ 
where $\varepsilon \in \{ -1,1 \}$.
When $p=q, \varepsilon =1$,
\begin{align*}
&\varphi_{j}^{C} (x_{p}+\varepsilon x_{q}-z)
= \varphi_{j}^{C} (2x_{p}-z)
\\
&\equiv
(-1)^{\ell -j+1}
(2x_{p})
\prod_{t=1}^{j-1} (x_{t}-x_{p})^{2}
\biggl\{
2(x_{j}-x_{p}) 
\prod_{t=j+1}^{\ell} (x_{t}^{2}-x_{p}^{2})
\biggr\}
\\
&=0
\end{align*}
for $1\leq p \leq \ell$.
Hence $\varphi_{j} \in D(\mathcal{S}(C_{\ell} ))$ for $1\leq j\leq \ell$.
\end{proof}

We may define an explicit  
$\mathbb{R}$-linear map 
$$
\Psi : E^{*} \rightarrow D_{0} (\Shi(C_{\ell} ))_{h} 
$$
 by
$$\Psi (x_{j}) = {\varphi}_{j}^{C}
\,\,\,\,
(1\le j\le \ell)$$
using the derivations
$
{\varphi}_{1}^{C}, 
\dots,  
{\varphi}_{\ell}^{C}
$ in Definition \ref{basisdefinitionC}.

\begin{theorem}
\label{uniqueC}
Let $\Phi$ be a root system of the type $C_{\ell}$. 

(1)
The map $$
\Xi : E^{*} \rightarrow D(\mathcal{A}(C_{\ell} ), 2)_{h}
$$
defined by $\Xi  = \rho\circ\Psi$ is a $W$-equivariant isomorphism.
%

(2)
The map 
$$\Psi : E^{*} \rightarrow D_{0} (\Shi(C_{\ell} ))_{h} 
$$ is a linear isomorphism. 
\end{theorem}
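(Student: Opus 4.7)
The plan is to follow the proof of Theorem \ref{unique} nearly verbatim, exploiting the following coincidences between types $B_{\ell}$ and $C_{\ell}$: as hyperplane arrangements $\A(C_{\ell}) = \A(B_{\ell})$ (the roots $2x_{i}$ and $x_{i}$ cut out the same hyperplane), the Weyl groups $W(C_{\ell}) = W(B_{\ell})$ coincide, and both Coxeter numbers equal $h = 2\ell$. In particular $D(\A(C_{\ell}), 2)_{h} = D(\A(B_{\ell}), 2)_{h}$ as $W$-modules, so Theorem \ref{Wequivariance} applies directly with $\Phi = C_{\ell}$.

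First I would compute $\overline{B}_{r,s}^{C}(x, 0)$ by reading off the leading coefficient of $B_{r,s}^{C}(x)$ from Definition 4.1(i). When $r$ is odd, the top-degree term of the right-hand side is $2(-1)^{s} x^{r+2s-1}$; matching with the top-degree term $c(r+2s) x^{r+2s-1}$ of $B_{r,s}^{C}(x+1) - B_{r,s}^{C}(x)$, where $c$ denotes the leading coefficient of $B_{r,s}^{C}(x)$, yields $c = 2(-1)^{s}/(r+2s)$. When $r$ is even, $\deg B_{r,s}^{C}(x) = r+2s-1$, hence $\overline{B}_{r,s}^{C}(x,0) = 0$. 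Summarizing,
\[
\overline{B}_{r,s}^{C}(x, 0) =
\begin{cases}
2(-1)^{s}\, x^{r+2s}/(r+2s) & (r \text{ odd}), \\
0 & (r \text{ even}),
\end{cases}
\]
which is exactly twice the corresponding formula for $\overline{B}_{r,s}^{B}(x,0)$ used in the proof of Theorem \ref{unique}(1).

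Next, substituting $z = 0$ into $\varphi_{j}^{C}$ kills every term with $N_{2} \neq \emptyset$ (because of the factor $\prod_{x_{t} \in N_{2}}(-x_{t}z)$), and the formula above kills every term with $k_{2} = 0$ (which has $r$ even). What remains reproduces the $B$-case calculation in Theorem \ref{unique}(1) up to an overall factor of $2$, giving
\[
\Xi(x_{j}) = 2 x_{j} \sum_{k=0}^{\ell-1} (-1)^{k}\, \sigma_{k}(x_{1}^{2}, \ldots, x_{j-1}^{2}, x_{j+1}^{2}, \ldots, x_{\ell}^{2}) \sum_{i=1}^{\ell} \frac{x_{i}^{2\ell-2k-1}}{2\ell-2k-1}\, \partial_{i}.
\]
The derivations $\sum_{i} x_{i}^{2\ell-2k-1}/(2\ell-2k-1)\, \partial_{i}$ are $W$-invariant, and each correspondence $x_{j} \mapsto x_{j} \sigma_{k}(x_{1}^{2}, \ldots, x_{j-1}^{2}, x_{j+1}^{2}, \ldots, x_{\ell}^{2})$ is $W$-equivariant, so $\Xi$ is $W$-equivariant.

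Since the reflection representation $E^{*}$ of $W$ is irreducible and $\Xi$ is evidently nonzero, $\ker \Xi$ is a proper $W$-submodule of $E^{*}$, hence zero; combined with $\dim_{\R} D(\A(C_{\ell}), 2)_{h} = \ell = \dim_{\R} E^{*}$ from Theorem \ref{Wequivariance}(1), this forces $\Xi$ to be an isomorphism, proving (1). Part (2) is then immediate: $\rho$ is a linear isomorphism by Theorem \ref{Wequivariance}(2), so $\Psi = \rho^{-1} \circ \Xi$ is also an isomorphism. The only genuinely new computation is the leading-coefficient analysis in the first step; after that, the structural coincidence of $\A(C_{\ell})$, $W(C_{\ell})$, and the degree-$h$ modules with their $B_{\ell}$ counterparts reduces everything to the already-verified $B$-case argument.
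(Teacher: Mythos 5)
Your proposal is correct and takes essentially the same route as the paper, whose proof of this theorem consists precisely of the observation $\overline{B}_{r,s}^{C}(x_{i},0)=2\overline{B}_{r,s}^{B}(x_{i},0)$ followed by an appeal to the type $B_{\ell}$ argument of Theorem \ref{unique}. Your leading-coefficient computation and the remarks that $\A(C_{\ell})=\A(B_{\ell})$, $W(C_{\ell})=W(B_{\ell})$ and $h=2\ell$ simply make explicit the details the paper leaves implicit.
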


\begin{proof}
Since
\[
\overline{B}_{r,s}^{C} (x_{i},0) 
=
2\overline{B}_{r,s}^{B} (x_{i},0) 
=
\begin{cases}
(-1)^{s} 2 x_{i}^{r+2s} / (r+2s)& (r:\text{odd number}) \\
0 & (r:\text{even number})
\end{cases},
\]
we may prove this theorem in the same way as Theorem \ref{unique}.
\end{proof}

\begin{theorem}
\label{mainC} 
The derivations $\theta_{E}, \varphi_{1}^{C}, \dots, \varphi_{\ell}^{C}$ 
form a basis for $D(\Shi(C_{\ell}))$. 
\end{theorem}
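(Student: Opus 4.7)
The plan is to follow the same template as the proof of Theorem \ref{mainB}, substituting the $C_\ell$ versions of the supporting results. First I would reduce the statement: since $D(\Shi(C_\ell))$ decomposes as $D_0(\Shi(C_\ell)) \oplus S\theta_E$ over $S$, and since each $\varphi_j^C$ is built solely from the derivations $\partial_i$ ($1 \leq i \leq \ell$), it satisfies $\varphi_j^C(z) = 0$, so $\varphi_j^C \in D_0(\Shi(C_\ell))$. Thus it suffices to prove that $\varphi_1^C, \dots, \varphi_\ell^C$ form an $S$-basis for the free module $D_0(\Shi(C_\ell))$.

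Next I would invoke Theorem \ref{Wequivariance}(2), which tells us that $D_0(\Shi(C_\ell))$ is a free $S$-module of rank $\ell$ isomorphic to $D_0(\Shi(C_\ell))_h \otimes_{\mathbb{R}} S$, with the isomorphism realized by multiplication. Consequently, an $\mathbb{R}$-basis of the $\ell$-dimensional space $D_0(\Shi(C_\ell))_h$ automatically lifts to an $S$-basis of $D_0(\Shi(C_\ell))$. Each $\varphi_j^C$ is homogeneous of degree $2\ell = h$ for $C_\ell$ and belongs to $D_0(\Shi(C_\ell))$ by the preceding proposition, so $\varphi_j^C \in D_0(\Shi(C_\ell))_h$ and the map $\Psi: E^* \to D_0(\Shi(C_\ell))_h$ sending $x_j \mapsto \varphi_j^C$ is well-defined.

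Finally, I would apply Theorem \ref{uniqueC}(2), which asserts that this $\Psi$ is a linear isomorphism. Since $x_1, \dots, x_\ell$ is an $\mathbb{R}$-basis of $E^*$, the images $\varphi_1^C, \dots, \varphi_\ell^C$ form an $\mathbb{R}$-basis of $D_0(\Shi(C_\ell))_h$. By the free-module structure cited above, they then form an $S$-basis of $D_0(\Shi(C_\ell))$, and appending the Euler derivation $\theta_E$ gives an $S$-basis of $D(\Shi(C_\ell))$ via the direct sum decomposition.

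The proof is essentially formal once all the machinery is in place, so there is no genuine obstacle at this stage: the real content has already been shifted into the preceding results. The only point worth double-checking is the identification of the degree-$h$ part in Theorem \ref{Wequivariance}(2) with the codomain of $\Psi$, i.e.\ confirming that $\Psi$ lands in $D_0(\Shi(C_\ell))_h$ (not merely $D(\Shi(C_\ell))_h$), which follows as noted from the absence of a $\partial_z$ component in $\varphi_j^C$.
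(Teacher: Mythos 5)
Your proof is correct and follows exactly the paper's argument: reduce via the decomposition $D(\Shi(C_{\ell}))=D_{0}(\Shi(C_{\ell}))\oplus S\theta_{E}$, then combine Theorem \ref{Wequivariance}(2) with Theorem \ref{uniqueC}(2) to pass from the $\mathbb{R}$-basis of $D_{0}(\Shi(C_{\ell}))_{h}$ to an $S$-basis of $D_{0}(\Shi(C_{\ell}))$. The paper's own proof is just a one-line citation of the same two theorems ``in the same way as the proof of Theorem \ref{mainB}''; you have merely filled in the routine details.
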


\begin{proof}
Apply Theorems \ref{uniqueC} (2)
and and \ref{Wequivariance} (2) in the same way as the proof of
Theorem \ref{mainB}.
\end{proof}

\begin{rem}
Since the $W$-equivariant
isomorphism $\Xi: E^{*} \rightarrow D(\A(B_{\ell}), 2)_{h} $ 
in Theorem \ref{unique} (1)
is unique up to a nonzero constant multiple by Schur's lemma,
the derivations $
\varphi_{1}^{B}|_{z=0},
\dots,
\varphi_{\ell}^{B}|_{z=0}$ 
coincide with the Solomon-Terao basis in \cite{ST98}
up to a nonzero constant multiple.
Therefore,
  our construction of
$
\varphi_{1}^{B},
\dots,
\varphi_{\ell}^{B}
$ 
can be regarded as an explicit realization of
the basis existence theorem by M. Yoshinaga
in \cite{Y04}.
This is also
true for the type $C_{\ell}$.  
\end{rem}

{\bf Acknowledgment.}
The author is deeply grateful to Professor H. Terao
for his advice and support.

 \vspace{5mm}

\end{document}